\newtheorem{lem}{Lemma}[section]
\newtheorem{thm}{Theorem}[section]
\newtheorem{cor}{Corollary}[section]
\newtheorem{remark}{Remark}
\theoremstyle{definition}
\begin{document}
\title{Inertia indices of signed graphs with given cyclomatic number and given number of pendant vertices  \footnote{This work is sponsored by Natural Science Foundation of Xinjiang Uygur Autonomous
Region (No. 2022D01A218).}}
\author{{Jie Pu,\ \   Fang Duan \footnote{Email: fangbing327@126.com}}\\[2mm]
\small School of Mathematics Science,  Xinjiang Normal University,\\
\small Urumqi, Xinjiang 830017, P.R. China}
\date{}
\maketitle {\flushleft\large\bf Abstract:} Let $\Gamma=(G, \sigma)$ be a signed graph of order $n$ with underlying graph $G$ and a sign function $\sigma: E(G)\rightarrow \{+, -\}$. Denoted by $i_+(\Gamma)$, $\theta(\Gamma)$ and $p(\Gamma)$ the positive inertia index, the cyclomatic number and the number of pendant vertices of $\Gamma$, respectively. In this article, we prove that $i_+(\Gamma)$, $\theta(\Gamma)$ and $p(\Gamma)$ are related by the inequality $i_+(\Gamma)\geq \frac{n-p(\Gamma)}{2}-\theta(\Gamma)$. Furthermore, we completely characterize the signed graph $\Gamma$ for which $i_+(\Gamma)=\frac{n-p(\Gamma)}{2}-\theta(\Gamma)$. As a by-product, the inequalities $i_-(\Gamma)\geq \frac{n-p(\Gamma)}{2}-\theta(\Gamma)$ and $\eta(\Gamma)\leq p(\Gamma)+2\theta(\Gamma)$ are also obtained, respectively.

\begin{flushleft}
\textbf{Keywords:} Inertia indices; Nullity; Cyclomatic number
\end{flushleft}
\textbf{AMS Classification:} 05C50

\section{Introduction}
Let $G=(V(G), E(G))$ be a simple graph with vertex set $V(G)=\{v_1, v_2, \ldots, v_n\}$ and edge set $E(G)$. $|V(G)|=n$ is called the \emph{order} of $G$. The \emph{cyclomatic number} of $G$, written as $\theta(G)$, is defined as $\theta(G)=|E(G)|-|V(G)|+c(G)$, where $c(G)$ denotes the number of connected components of $G$.
We say that a connected graph $G$ is a \emph{tree} if $\theta(G)=0$ and a \emph{unicyclic graph} if $\theta(G)=1$. A graph $H$ is called a \emph{subgraph} of $G$ if $V(H)\subseteq V(G)$ and $E(H)\subseteq E(G)$. Further, $H$ is called an \emph{induced subgraph} of $G$ if two vertices of $V(H)$ are adjacent in $H$ if and only if they are adjacent in $G$. For a subset $U$ of $V(G)$, denoted by $G-U$ the induced subgraph of $G$ obtained from $G$ by removing all vertices of $U$ together with all edges incident to them. In particular, we use $G-x$ or $G-x-y$ instead of $G-U$ if $U=\{x\}$ or $U=\{x, y\}$. We define $N_G{(x)}=\{y\in V(G): xy\in E(G)\}$ the \emph{neighbor set} of $x$ in $G$, and $|N_G(x)|$ is called the \emph{degree} of $x$ in $G$, which is written as $d_G(x)$ or $d(x)$. A vertex of $G$ is said to be \emph{pendant} if it has degree 1, and denoted by $p(G)$ the number of pendant vertices in $G$. As usual, $C_n$ and $P_n$ is referred to the \emph{cycle} and \emph{path} with $n$ vertices, respectively.

The \emph{adjacency matrix} $A(G)=(a_{ij})_{n\times n}$ of $G$ is defined as follows: $a_{ij}=1$ if $v_i$ is adjacent to $v_j$, and $a_{ij}=0$ otherwise. The number of positive and negative eigenvalues of $A(G)$ are called \emph{positive inertia index} and \emph{negative inertia index} of $G$, which are denoted by $i_+(G)$ and $i_-(G)$, respectively. The \emph{rank} $r(G)$ of $G$ is the number of non-zero eigenvalues of $A(G)$, that is, $r(G)=i_-(G)+i_+(G)$, and the \emph{nullity} $\eta(G)$ of $G$ is the number of zero eigenvalues of $A(G)$. Evidently, $r(G)+\eta(G)=n$. As the field of interdisciplinary research continues to evolve, the study of inertia indices and nullity has also been developing in an interdisciplinary direction. For example, in chemical graph theory and algebraic graph theory, inertia indices and nullity are used to analyze the chemical properties of molecules and the algebraic structure of graphs. Hence the invariants $i_+(G)$, $i_-(G)$ and $\eta(G)$ have been broadly investigated in the last decades. For some results on this topic, we refer the reader to \cite{FanY.Z, GuiHai.Yu, H.Ma,A.Torgasev}, and references therein.

A signed graph $\Gamma=(G, \sigma)$ consists of the underlying graph $G$ and a sign function $\sigma: E(G)\rightarrow \{+, -\}$. The \emph{adjacency matrix} of $\Gamma=(G, \sigma)$ is defined as $A(\Gamma)=(a_{u,v}^\sigma)_{u,v\in V(G)}$, where $a_{u,v}^\sigma=\sigma(uv) \cdot 1$ if $uv\in E(G)$ and $a_{u,v}^\sigma=0$ otherwise. The inertia indices, nullity and other basic notations of signed graphs are similarly to that of simple graphs. The sign of a cycle $C^\sigma$ is defined by $sgn(C^\sigma)=\prod_{e\in E(C)}\sigma(e)$. If $sgn(C^\sigma)=+$ (resp., $sgn(C^\sigma)=-$), then $C^\sigma$ is said to be positive (resp., negative). A signed graph $\Gamma$ is said to be \emph{balanced} if all of its cycles are positive, and \emph{unbalanced} otherwise. In particular, an acyclic signed graph is balanced.

Signed graphs were introduced by Harary \cite{Harary.F} in connection with the study of the theory of social balance in social psychology (see \cite{B.Deradass}). In particular, structural balance dynamics under perceived sentiment is an important research direction(see \cite{Shang.Y}).
And it also has extensive applied research in other fields: In software engineering, software objectives can be represented as vertices in signed graphs, with positive and negative dependency relationships between objectives denoted by edges, and clustering algorithms are used to partition the objective graph for optimizing component development. In the field of data clustering, nodes represent data objects, and the positive or negative signs of edges indicate similarity or correlation, enabling data clustering, among others.
In the study of spectral graph theory, Many results related to simple graphs cannot be directly extended to signed graphs. For more background on signed graphs we refer to \cite{F.Belardo}. Recently, there have been a number of investigations on the spectra of signed graphs, see \cite{F.Duan1,W.H.Haemers, Q.Wu} and references therein.

For a simple graph $G$, the relations between $\eta(G)$, $\theta(G)$, and $p(G)$ are established in \cite{Xiaobin.Ma}, which spire us to consider relations about $i_+(\Gamma)$, $\theta(\Gamma)$ and $p(\Gamma)$ for a signed graph $\Gamma$. In this paper, we prove that $i_+(\Gamma)\geq \frac{n-p(\Gamma)}{2}-\theta(\Gamma)$ for a signed graph $\Gamma$ of order $n\geq 2$. As a by-product, $i_-(\Gamma)\geq \frac{n-p(\Gamma)}{2}-\theta(\Gamma)$ is also derived. By using these two inequalities of $i_+(\Gamma)$ and $i_-(\Gamma)$, the main results about nullity in \cite{Xiaobin.Ma} can be obtained because of $\eta(\Gamma)=n-(i_+(\Gamma)+i_-(\Gamma))$. Furthermore, we characterize the corresponding extremal signed graphs for $i_+(\Gamma)=\frac{n-p(\Gamma)}{2}-\theta(\Gamma)$ and $i_-(\Gamma)=\frac{n-p(\Gamma)}{2}-\theta(\Gamma)$, respectively.

\section{Elementary Lemmas}
In this section, we list some notions and lemmas for the latter use.

\begin{thm}[\cite{D.Cvetkovi}]\label{thm-2-0}
(Interlacing Theorem) Let $Q$ be a real $n\times m$ matrix such that $Q^TQ=I_m$ (where $m < n$), and let $A$ be an $n\times n$ real symmetric matrix
with eigenvalues $\lambda_1\geq \lambda_2\geq \cdots\geq \lambda_n$. If the eigenvalues of $B=Q^TAQ$ are $\mu_1\geq \mu_2\geq \cdots\geq \mu_m$, then
the eigenvalues of $B$ interlace those of $A$, that is, $\lambda_{n-m+i}\leq \mu_i\leq \lambda_i (i=1,2, ..., m)$.
\end{thm}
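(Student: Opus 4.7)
The plan is to derive both halves of the interlacing inequality from the Courant--Fischer min--max characterization of eigenvalues of real symmetric matrices, exploiting the fact that the hypothesis $Q^TQ = I_m$ makes $Q\colon \mathbb{R}^m\to\mathbb{R}^n$ a linear isometry. Two identities will do all the work: for every $y\in\mathbb{R}^m$, $(Qy)^T(Qy)=y^TQ^TQy=y^Ty$, and $(Qy)^TA(Qy)=y^TBy$. Moreover, since $Q^TQ=I_m$ forces $Q$ to have full column rank, the image $QT$ of any $k$-dimensional subspace $T\subseteq\mathbb{R}^m$ is again $k$-dimensional inside $\mathbb{R}^n$.

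To prove the upper bound $\mu_i\leq\lambda_i$, I would write $\lambda_i=\max_{\dim S=i}\min_{0\neq x\in S}(x^TAx)/(x^Tx)$ with $S$ ranging over subspaces of $\mathbb{R}^n$, and the analogous formula for $\mu_i$ with $T$ ranging over subspaces of $\mathbb{R}^m$. By the two identities above, the inner Rayleigh quotient minimization for $\mu_i$ on $T$ equals the same minimization on $QT$. Hence $\mu_i$ is a max--min taken over the restricted family $\{QT:\dim T=i\}$ of $i$-dimensional subspaces of $\mathbb{R}^n$, and this restricted maximum cannot exceed the maximum over all $i$-dimensional subspaces, namely $\lambda_i$.

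For the lower bound $\lambda_{n-m+i}\leq\mu_i$, I would invoke the dual Courant--Fischer formula $\lambda_j=\min_{\dim S=n-j+1}\max_{0\neq x\in S}(x^TAx)/(x^Tx)$, together with $\mu_i=\min_{\dim T=m-i+1}\max_{0\neq y\in T}(y^TBy)/(y^Ty)$. Setting $j=n-m+i$ gives $n-j+1=m-i+1$, so the $A$-side and $B$-side dimensions match. Applying the same isometry/injectivity transplantation as before, $\mu_i$ now appears as a min--max over the subfamily $\{QT:\dim T=m-i+1\}$ of $(m-i+1)$-dimensional subspaces of $\mathbb{R}^n$; the minimum over a subfamily is at least the minimum over all such subspaces, yielding $\mu_i\geq\lambda_{n-m+i}$.

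The entire proof is structural once Courant--Fischer is granted, and requires no computation beyond the two displayed identities. The only non-trivial step is the index bookkeeping in the dual min--max: one must verify that the codimension $n-j+1$ corresponding to $j=n-m+i$ really does equal the codimension $m-i+1$ used for $\mu_i$, so that the transplanted subspaces $QT$ have precisely the dimension appearing in the min--max expression for $\lambda_{n-m+i}$. I expect this indexing alignment to be the main (and really only) obstacle to writing the proof cleanly.
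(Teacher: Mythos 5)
Your proposal is a correct and complete proof: the two identities $(Qy)^T(Qy)=y^Ty$ and $(Qy)^TA(Qy)=y^TBy$ do exactly the work you claim, the transplanted subspaces $QT$ have the right dimensions because $Q$ is injective, and the index check $n-(n-m+i)+1=m-i+1$ in the dual min--max goes through. The paper itself gives no proof of this statement --- it is quoted from Cvetkovi\'{c}--Doob--Sachs --- and the standard textbook proof is precisely this Courant--Fischer argument, so your route matches the intended one.
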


Lemma \ref{lem-2-1} can be easily deduced from Interlacing Theorem.

\begin{lem}\label{lem-2-1}
Let $\Gamma'$ be an induced signed subgraph of signed graph $\Gamma$. Then $i_+(\Gamma)\geq i_+(\Gamma')$ and $i_-(\Gamma)\geq i_-(\Gamma')$.
\end{lem}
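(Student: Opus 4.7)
The plan is to deduce this directly from the Interlacing Theorem (Theorem~\ref{thm-2-0}) by realizing $A(\Gamma')$ as a compression of $A(\Gamma)$ via a coordinate projection. If $|V(\Gamma)|=n$ and $|V(\Gamma')|=m$, the case $m=n$ is trivial (then $\Gamma'=\Gamma$ since $\Gamma'$ is induced), so assume $m<n$. Order the vertices of $\Gamma$ so that those of $\Gamma'$ come first, and let $Q$ be the $n\times m$ matrix whose columns are the first $m$ standard basis vectors of $\mathbb{R}^n$. Then $Q^TQ=I_m$, and because $\Gamma'$ is an \emph{induced} signed subgraph, $Q^TA(\Gamma)Q$ is exactly the principal submatrix of $A(\Gamma)$ indexed by $V(\Gamma')$, which equals $A(\Gamma')$.

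Next I would apply Theorem~\ref{thm-2-0} to deduce that the eigenvalues $\mu_1\geq\mu_2\geq\cdots\geq\mu_m$ of $A(\Gamma')$ satisfy $\lambda_{n-m+i}\leq \mu_i\leq \lambda_i$, where $\lambda_1\geq\cdots\geq\lambda_n$ are the eigenvalues of $A(\Gamma)$. The bound $\mu_i\leq\lambda_i$ immediately handles $i_+$: if $\mu_1,\ldots,\mu_{i_+(\Gamma')}$ are all strictly positive, then $\lambda_i\geq\mu_i>0$ for $i=1,\ldots,i_+(\Gamma')$, whence $i_+(\Gamma)\geq i_+(\Gamma')$.

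For $i_-(\Gamma)\geq i_-(\Gamma')$, I would use the companion inequality $\lambda_{n-m+i}\leq\mu_i$. Set $k=i_-(\Gamma')$, so $\mu_{m-k+1},\ldots,\mu_m<0$. Taking $i=m-k+j$ for $j=1,\ldots,k$ gives $\lambda_{n-k+j}\leq\mu_{m-k+j}<0$, producing $k$ strictly negative eigenvalues among $\lambda_{n-k+1},\ldots,\lambda_n$; hence $i_-(\Gamma)\geq k=i_-(\Gamma')$. (Alternatively, the $i_-$ statement follows by applying the $i_+$ argument to $-A(\Gamma)$ and $-A(\Gamma')$, which is also a signed-graph adjacency matrix after negating one endpoint's incident signs, but the direct interlacing argument above avoids any sign bookkeeping.)

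There is no real obstacle here; the only subtle point worth noting explicitly is the use of the \emph{induced} hypothesis, which is what guarantees that $A(\Gamma')$ equals the principal submatrix $Q^TA(\Gamma)Q$ rather than some modification of it, so the Interlacing Theorem applies verbatim.
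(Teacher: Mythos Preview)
Your proof is correct and follows exactly the approach indicated in the paper, which simply remarks that Lemma~\ref{lem-2-1} is an easy consequence of the Interlacing Theorem without spelling out the details. Your write-up makes the principal-submatrix realization and the eigenvalue counting explicit, which is precisely what the paper leaves to the reader.
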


\begin{lem}[\cite{G.H.Yu}]\label{lem-2-2}
Let $\Gamma$ be a signed graph containing a pendant vertex $u$, and let $\Gamma'$ be the induced signed subgraph of $\Gamma$ obtained by deleting $u$ together with the vertices adjacent to it. Then $i_+(\Gamma)=i_+(\Gamma')+1$ and $i_-(\Gamma)=i_-(\Gamma')+1$.
\end{lem}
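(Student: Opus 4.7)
The plan is to reduce the proof to a congruence of symmetric matrices together with Sylvester's law of inertia. Let $v$ denote the unique neighbor of the pendant vertex $u$, set $s=\sigma(uv)\in\{+1,-1\}$, and let $w_1,\dots,w_{n-2}$ be the remaining vertices of $\Gamma$. With respect to the vertex ordering $u,v,w_1,\dots,w_{n-2}$, the adjacency matrix takes the block form
\[
A(\Gamma)=\begin{pmatrix} 0 & s & 0^{T} \\ s & 0 & \alpha^{T} \\ 0 & \alpha & B \end{pmatrix},
\]
where $\alpha\in\mathbb{R}^{n-2}$ records the signed adjacencies of $v$ to the $w_i$'s and $B=A(\Gamma')$, since $u$ has no neighbors other than $v$.

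Next, I would clear the vector $\alpha$ by a congruence. Consider the elementary matrix
\[
P=\begin{pmatrix} 1 & 0 & -\alpha^{T}/s \\ 0 & 1 & 0^{T} \\ 0 & 0 & I_{n-2} \end{pmatrix},
\]
which is invertible because $s\neq 0$. A direct block computation gives
\[
P^{T}A(\Gamma)P=\begin{pmatrix} 0 & s & 0^{T} \\ s & 0 & 0^{T} \\ 0 & 0 & B \end{pmatrix}.
\]
The $2\times 2$ block $\begin{pmatrix} 0 & s \\ s & 0\end{pmatrix}$ has eigenvalues $+1$ and $-1$ regardless of the sign of $s$, and the remaining block is precisely $A(\Gamma')$.

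By Sylvester's law of inertia, $A(\Gamma)$ and $P^{T}A(\Gamma)P$ share the same positive and negative inertia indices. Reading these off from the block-diagonal form yields $i_{+}(\Gamma)=i_{+}(\Gamma')+1$ and $i_{-}(\Gamma)=i_{-}(\Gamma')+1$, as desired.

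There is no real obstacle here: the only subtle point is to verify that the congruence $P$ simultaneously eliminates the off-diagonal $\alpha$ in the $v$-row/column without introducing any new off-diagonal terms among the $w_i$'s. This works because the $u$-row has only one nonzero entry, located in the $v$-column, so column operations using the $u$-column touch only the $v$-coordinate; the symmetric row operations then leave $B$ untouched. If one preferred to avoid the matrix manipulation, an equivalent argument could be phrased via the Schur-complement-type identity for the rank and inertia of a bordered symmetric matrix, but the elementary congruence above is the cleanest route.
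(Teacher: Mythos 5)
Your proof is correct: the block computation $P^{T}A(\Gamma)P$ does eliminate $\alpha$ without disturbing $B=A(\Gamma')$, the $2\times 2$ block $\left(\begin{smallmatrix}0 & s\\ s & 0\end{smallmatrix}\right)$ contributes exactly one positive and one negative eigenvalue for either sign $s=\pm 1$, and Sylvester's law of inertia finishes the argument. The paper itself states this lemma as a cited result from the literature and gives no proof, so there is nothing to compare against; your congruence argument is the standard self-contained justification and fills that gap cleanly.
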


Lemma \ref{lem-2-3} is well-know and the proof is omitted.

\begin{lem}\label{lem-2-3}
Let signed graph $\Gamma=\Gamma_1\cup \Gamma_2\cup \cdots\cup \Gamma_t$, where $\Gamma_i$ $(i=1, 2,\ldots, t)$ are connected signed components of $\Gamma$. Then $i_+(\Gamma)=\sum^t _{i=1} i_+(\Gamma_i)$ and $i_-(\Gamma)=\sum^t _{i=1} i_-(\Gamma_i)$.
\end{lem}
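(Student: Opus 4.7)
The plan is to exploit the block-diagonal structure of the adjacency matrix that is forced by the decomposition into connected components. First I would order the vertices of $\Gamma$ componentwise: list the vertices of $\Gamma_1$, then those of $\Gamma_2$, and so on up to $\Gamma_t$. Since distinct components share no edges, in this ordering the adjacency matrix takes the form
\[
A(\Gamma)=\mathrm{diag}\bigl(A(\Gamma_1),\,A(\Gamma_2),\,\ldots,\,A(\Gamma_t)\bigr),
\]
i.e.\ a real symmetric block-diagonal matrix whose diagonal blocks are exactly the adjacency matrices of the signed components.

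Next I would invoke the standard linear-algebra fact that the spectrum (with multiplicities) of a block-diagonal symmetric matrix is the multiset union of the spectra of its diagonal blocks; equivalently, the characteristic polynomial factors as $\det(\lambda I-A(\Gamma))=\prod_{i=1}^{t}\det(\lambda I-A(\Gamma_i))$. From this factorization the count of positive roots of $\det(\lambda I-A(\Gamma))$ is the sum of the counts for the individual factors, and similarly for negative roots. That is exactly the identities $i_+(\Gamma)=\sum_{i=1}^{t} i_+(\Gamma_i)$ and $i_-(\Gamma)=\sum_{i=1}^{t} i_-(\Gamma_i)$.

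There is essentially no obstacle here, which is why the authors declare the lemma well-known and omit the proof. The sign function $\sigma$ plays no special role: each entry $a_{u,v}^{\sigma}\in\{-1,0,+1\}$ still satisfies $a_{u,v}^{\sigma}=a_{v,u}^{\sigma}$, so $A(\Gamma)$ remains symmetric and the usual real spectral theorem together with the block-diagonal decomposition above deliver the result without further work.
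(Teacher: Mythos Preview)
Your proposal is correct and is precisely the standard argument one has in mind for this well-known fact: block-diagonalize $A(\Gamma)$ according to the components and read off the inertia indices as sums. The paper itself omits the proof for exactly this reason, so there is nothing to compare.
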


For a simple graph, the following lemma is derived in \cite{Xiaobin.Ma}, we present the proof here for signed graphs by a similar manner.
\begin{lem}\label{lem-2-4}
Let $x$ be any vertex of a connected signed graph $\Gamma$ and $s$ be the number of signed components of $\Gamma-x$. Then
\begin{enumerate}[(i)]
\item $d(x)\geq m+s-r$, where $m$ is the number of 2-degree neighbors of $x$, and $r$ is the number of components containing 2-degree neighbors of $x$;
\item $\theta(\Gamma-x)=\theta(\Gamma)-d(x)+s$.
\end{enumerate}
\end{lem}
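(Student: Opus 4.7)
The plan is to verify (ii) directly from the definition of cyclomatic number and then obtain (i) by decomposing the edges incident to $x$ according to the component of $\Gamma-x$ they enter. Since $d(x)$, $\theta$, and the component structure depend only on the underlying graph, signs play no role and the argument is a purely combinatorial edge count.

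For (ii), deleting $x$ removes exactly one vertex and exactly $d(x)$ edges, and the single component of $\Gamma$ is replaced by the $s$ components of $\Gamma-x$. The definition $\theta(H)=|E(H)|-|V(H)|+c(H)$ then gives
\[
\theta(\Gamma-x)=\bigl(|E(\Gamma)|-d(x)\bigr)-\bigl(|V(\Gamma)|-1\bigr)+s=\theta(\Gamma)-d(x)+s,
\]
using $c(\Gamma)=1$, which is (ii).

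For (i), label the components of $\Gamma-x$ as $C_1,\dots,C_s$ and, for each $i$, let $d_i$ be the number of edges of $\Gamma$ joining $x$ to $C_i$ and $m_i$ the number of neighbors of $x$ in $C_i$ that have degree $2$ in $\Gamma$. Then $d(x)=\sum_i d_i$, $m=\sum_i m_i$, and $r=\#\{i:m_i\ge 1\}$. Two observations control the sum. First, connectivity of $\Gamma$ forces $d_i\ge 1$ for every $i$. Second, each degree-$2$ neighbor of $x$ uses exactly one of its two incident edges to reach $x$, so the $m_i$ such neighbors sitting in $C_i$ contribute $m_i$ distinct edges at $x$, giving $d_i\ge m_i$. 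Splitting the sum according to whether $m_i\ge 1$ or $m_i=0$ yields
\[
d(x)=\sum_{i=1}^{s}d_i\ \ge\ \sum_{m_i\ge 1}m_i+\sum_{m_i=0}1\ =\ m+(s-r),
\]
which is (i). There is no substantive obstacle here: the only point to watch is that every edge at $x$ belongs to a unique $C_i$ because the components are vertex-disjoint, so the two lower bounds are applied to disjoint blocks of edges and no double-counting occurs.
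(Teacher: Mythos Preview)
Your proof is correct and follows essentially the same approach as the paper's: part (ii) is the same direct computation from the definition of $\theta$, and for part (i) the paper likewise splits the neighbors of $x$ into the $r$ components containing $2$-degree neighbors (bounding their contribution below by $m$) and the remaining $s-r$ components (each contributing at least one edge), which is exactly your $d_i\ge m_i$ versus $d_i\ge 1$ dichotomy in slightly different notation.
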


\begin{proof}
Let $x\in V(\Gamma)$ and $\Gamma-x=H_1^\sigma\cup H_2^\sigma\cup\cdots\cup H_s^\sigma$, where $H_k^\sigma$ $(k=1,\ldots,s)$ are all connected signed components of $\Gamma-x$. We arrange these components such that $H_i^\sigma$ contains $2$-degree neighbors of $x$ for $i=1,\cdots,r$, and $H_j^\sigma$ contains no $2$-degree neighbors of $x$ for $j=r+1,\cdots,s$. Then
$\sum_{i=1}^{r}|N_{H_i^\sigma}(x)|\ge m\ge r$, and $\sum_{j=r+1}^{s}|N_{H_j^\sigma}(x)|\ge s-r$. Therefore,
$$d(x)=\sum_{i=1}^{r}|N_{H_i^\sigma}(x)|+\sum_{j=r+1}^{s}|N_{H_j^\sigma}(x)|\geq m+s-r,$$ so (i) is derived.

The following equality is consequences of the definition of $\theta(\Gamma)$,
\begin{align}
\theta(\Gamma-x)=|E(\Gamma-x)|-|V(\Gamma-x)|+s&=(|E(\Gamma)|-d(x))-(|V(\Gamma)|-1)+s\nonumber\\
&=(|E(\Gamma)|-|V(\Gamma)|+1)-d(x)+s\nonumber\\
&=\theta(\Gamma)-d(x)+s\nonumber.
\end{align}
Hence (ii) holds.
\end{proof}

\begin{lem}[\cite{GuiHai.Yu}, \cite{G.H.Yu}, \cite{G.H.Yu1}]\label{lem-2-5}
Let $C_n^\sigma$, $P_n^\sigma$ be a signed cycle, a signed path of order $n$, respectively.
\begin{enumerate}[(i)]
\item If $C_n^\sigma$ is balanced, then $$i_+(C_n^\sigma)=\left\{
        \begin{array}{cccccc}
        \frac{n}{2}-1,\ n\equiv\ 0(\bmod\ 4);\\
        \frac{n+1}{2},\ n\equiv\ 1(\bmod\ 4);\\
        \frac{n}{2},\ n\equiv\ 2(\bmod\ 4);\\
        \frac{n-1}{2},\ n\equiv\ 3(\bmod\ 4).
\end{array}
      \right.
i_-(C_n^\sigma)=\left\{
        \begin{array}{cccccc}
        \frac{n}{2}-1,\ n\equiv\ 0(\bmod\ 4);\\
        \frac{n-1}{2},\ n\equiv\ 1(\bmod\ 4);\\
        \frac{n}{2},\ n\equiv\ 2(\bmod\ 4);\\
        \frac{n+1}{2},\ n\equiv\ 3(\bmod\ 4).\\
\end{array}
      \right.$$

\item  If $C_n^\sigma$ is unbalanced, then $$i_+(C_n^\sigma)=\left\{
        \begin{array}{cccccc}
        \frac{n}{2},\ n\equiv\ 0(\bmod\ 4);\\
        \frac{n-1}{2},\ n\equiv\ 1(\bmod\ 4);\\
        \frac{n}{2}-1,\ n\equiv\ 2(\bmod\ 4);\\
        \frac{n+1}{2},\ n\equiv\ 3(\bmod\ 4).
\end{array}
      \right.
i_-(C_n^\sigma)=\left\{
        \begin{array}{cccccc}
        \frac{n}{2},\ n\equiv\ 0(\bmod\ 4);\\
        \frac{n+1}{2},\ n\equiv\ 1(\bmod\ 4);\\
        \frac{n}{2}-1,\ n\equiv\ 2(\bmod\ 4);\\
        \frac{n-1}{2},\ n\equiv\ 3(\bmod\ 4).\\
\end{array}
      \right.$$
\item $i_+(P_n^\sigma)=i_-(P_n^\sigma)=\lfloor\frac{n}{2}\rfloor$.
\end{enumerate}
\end{lem}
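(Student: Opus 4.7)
The plan is to treat paths and cycles separately, exploiting the fact that switching equivalence preserves the spectrum (the two adjacency matrices are conjugate by a diagonal $\pm 1$ matrix), so it suffices to compute inertias for canonical representatives in each switching class.

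For (iii), every signed tree is balanced and hence switching-equivalent to its underlying graph, so $i_{\pm}(P_n^\sigma)=i_{\pm}(P_n)$. Applying Lemma \ref{lem-2-2} to an endpoint removes it together with its unique neighbor, leaving $P_{n-2}$; thus $i_{\pm}(P_n)=i_{\pm}(P_{n-2})+1$. Induction from the base cases $i_{\pm}(P_1)=0$ and $i_{\pm}(P_2)=1$ yields $i_{\pm}(P_n)=\lfloor n/2\rfloor$.

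For (i) and (ii), every signed $n$-cycle is switching-equivalent to either $C_n$ with all edges positive (balanced case) or $C_n$ with a single negative edge (unbalanced case); both representatives are (signed) circulants and diagonalize in the Fourier basis. The resulting eigenvalues are $2\cos(2\pi k/n)$ in the balanced case and $2\cos(\pi(2k+1)/n)$ in the unbalanced case, for $k=0,1,\ldots,n-1$. A zero eigenvalue appears exactly when the argument equals $\pi/2$ or $3\pi/2$: this forces $n\equiv 0\pmod 4$ in the balanced case and $n\equiv 2\pmod 4$ in the unbalanced case, each contributing a pair of zeros. In the remaining residue classes the nonzero cosines split almost evenly, and the identity $\cos(\pi-t)=-\cos(t)$ accounts for the symmetry between the balanced and unbalanced tables.

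The only real obstacle is the careful bookkeeping across the eight cases (two switching types times four residues of $n\bmod 4$), in particular keeping track of which residue class shifts a positive eigenvalue of the balanced cycle into a negative one of the unbalanced cycle. Once the eigenvalue lists above are in hand, counting the indices $k$ for which the cosine is positive versus negative is a routine mod-$4$ tally, and no deeper spectral input is needed.
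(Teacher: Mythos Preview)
Your argument is sound, but there is nothing in the paper to compare it to: Lemma~\ref{lem-2-5} is quoted from \cite{GuiHai.Yu,G.H.Yu,G.H.Yu1} and the paper supplies no proof of its own. So the comparison you were asked for is vacuous here.

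A couple of minor remarks on the write-up itself. First, the unbalanced representative with a single negative edge is not a circulant in the strict sense (the rows are not cyclic shifts of one another); what you are really using is that its adjacency matrix equals $S+S^{T}$ for the signed shift $S$ satisfying $S^{n}=-I$, whose eigenvalues are the $n$-th roots of $-1$, whence the spectrum $2\cos\bigl(\pi(2k+1)/n\bigr)$. Phrasing it that way avoids any ambiguity. Second, in part~(iii) you invoke Lemma~\ref{lem-2-2} before establishing the lemma you are proving; since Lemma~\ref{lem-2-2} in the paper is an independent result (from \cite{G.H.Yu}) and does not rely on Lemma~\ref{lem-2-5}, there is no circularity, but it is worth being aware of the dependency. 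With these cosmetic adjustments the proof goes through; the eight-case tally you describe is indeed routine once the two eigenvalue lists are in hand.
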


\section{Positive inertia index of signed graphs with given cyclomatic number and given number of pendant vertices}
In this section, the relations between positive inertia index of a signed graph $\Gamma$ and $\theta(\Gamma)$, $p(\Gamma)$ are characterized. Furthermore, we provide all extremal signed graphs for the lower bound. At first, we consider the connected signed graphs and the following inequalities hold.
\begin{thm}\label{thm-3-1}
Let $\Gamma$ be a connected signed graph of order $n\geq 2$. Then $i_+(\Gamma)\geq \frac{n-p(\Gamma)}{2}-\theta(\Gamma)$. Moreover, $i_+(\Gamma)\geq \frac{n-p(\Gamma)+1}{2}-\theta(\Gamma)$ if $p(\Gamma)\geq 1$, or $p(\Gamma)=0$ and two distinct cycles of $\Gamma$ have common vertices.
\end{thm}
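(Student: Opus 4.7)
I would prove both parts of the theorem simultaneously by strong induction on $n$. For the base case $n = 2$, the only connected signed graph is $K_2^\sigma$, where $i_+(\Gamma) = 1$, $p(\Gamma) = 2$, $\theta(\Gamma) = 0$, so both inequalities hold. For the inductive step the natural dichotomy is whether $\Gamma$ has a pendant vertex, since Lemma \ref{lem-2-2} supplies a clean one-step reduction in that case.

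Assume $p(\Gamma) \geq 1$ and fix a pendant $u$ with unique neighbor $v$. Lemmas \ref{lem-2-2} and \ref{lem-2-3} give
\begin{equation*}
i_+(\Gamma) = 1 + \sum_{i=1}^{s} i_+(\Gamma_i),
\end{equation*}
where $\Gamma_1, \ldots, \Gamma_s$ are the connected components of $\Gamma - u - v$. The singleton components are exactly the pendants of $\Gamma$ adjacent to $v$ other than $u$ (say $k$ of them) and contribute $0$ to the sum, while each non-singleton $\Gamma_i$ is covered by the inductive hypothesis. Lemma \ref{lem-2-4}(ii) applied to $v$ in the connected graph $\Gamma - u$ gives $\theta(\Gamma - u - v) = \theta(\Gamma) - d(v) + 1 + s$, and a careful pendant count shows that $\sum_i p(\Gamma_i)$ equals the pendants of $\Gamma$ that survive into non-singleton components plus $m$ new pendants arising from the $2$-degree neighbors of $v$. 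Substituting these identities reduces the desired inequality to a linear estimate of the form $2d(v) + r \geq 2s + m + 1$ (basic) or $2d(v) + r \geq 2s + m + 2$ (moreover), where $r$ is the number of components containing a new pendant. Lemma \ref{lem-2-4}(i) furnishes $d(v) \geq m + s - r + 1$, and invoking the stronger inductive bound on the $r$ components that inherit a new pendant supplies the additional $r/2$ of slack needed to close both estimates.

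When $p(\Gamma) = 0$, every vertex of $\Gamma$ has degree at least $2$. If $\Gamma \cong C_n^\sigma$, a direct check against Lemma \ref{lem-2-5}(i)--(ii) in each congruence class of $n$ modulo $4$ and each cycle sign confirms $i_+(C_n^\sigma) \geq n/2 - 1 = n/2 - \theta(\Gamma)$; the moreover clause does not apply because a single cycle has no two distinct cycles. Otherwise two distinct cycles of $\Gamma$ share a vertex, and I would choose a vertex $v$ on the shared portion with at least one $2$-degree neighbor, so that $\Gamma - v$ is connected and inherits a pendant; applying Lemma \ref{lem-2-1} to get $i_+(\Gamma) \geq i_+(\Gamma - v)$ and then the inductive (moreover) hypothesis on $\Gamma - v$ together with Lemma \ref{lem-2-4} yields the strengthened bound. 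The main obstacle is the pendant subcase where several non-singleton components $\Gamma_i$ of $\Gamma - u - v$ are themselves pendant-free, so only the basic form of the inductive hypothesis is available for those and one must verify via tight case analysis that the surplus from the $r$ components which do contain new pendants, combined with Lemma \ref{lem-2-4}(i), still closes the arithmetic. Choosing $v$ correctly in the no-pendant case so that the moreover hypothesis legitimately propagates through the induction is the analogous technical difficulty.
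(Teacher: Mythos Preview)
Your no-pendant case has a genuine gap: the dichotomy ``$\Gamma\cong C_n^\sigma$, otherwise two distinct cycles share a vertex'' is false. A connected graph with $p(\Gamma)=0$ that is not a single cycle can still be cycle-disjoint; the simplest example is a dumbbell (two vertex-disjoint cycles joined by a path of length~$\ge 1$), which has minimum degree~$2$ but no pair of cycles with a common vertex. For such graphs only the basic inequality $i_+(\Gamma)\ge n/2-\theta(\Gamma)$ is asserted, and your outline supplies no argument for it. Relatedly, even when two cycles do intersect, neither of your extra hypotheses on the chosen vertex $v$ is guaranteed: $v$ need not have a $2$-degree neighbour (take $K_4$), and $\Gamma-v$ need not be connected (take a figure-eight where $v$ is the articulation point). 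So the reduction ``$\Gamma-v$ is connected and inherits a pendant, apply the moreover hypothesis'' does not go through as stated.

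The paper fixes both issues with a single uniform argument for $p(\Gamma)=0$: pick any $x$ lying on a cycle, allow $\Gamma-x$ to fall into components $H_1^\sigma,\ldots,H_s^\sigma$, apply the inductive hypothesis componentwise (the stronger form on those $H_i^\sigma$ that acquire pendants, the basic form on the rest), and then use Lemma~\ref{lem-2-4}(i) together with $d(x)\ge s+1$ to close the arithmetic. The extra $1/2$ in the moreover clause comes not from forcing $\Gamma-x$ to have a pendant, but from choosing $x\in V(C_1^\sigma)\cap V(C_2^\sigma)$ with $d_{C_1^\sigma\cup C_2^\sigma}(x)\ge 3$, which upgrades $d(x)\ge s+1$ to $d(x)\ge s+2$. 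Your pendant case ($p(\Gamma)\ge 1$), by contrast, is essentially the paper's Case~3 carried out uniformly; the arithmetic you sketch there is the same as the paper's and can be made to work, so the real repair needed is only in the $p(\Gamma)=0$ branch.
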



\begin{proof} We will prove this theorem by induction on the order $n$ of $\Gamma$. Firstly, the cases $n=2$ and $n=3$ are verified. If $n=2$, then $\Gamma\cong P_2^\sigma$. Note that $i_+(P_2^\sigma)=1$, $\theta(P_2^\sigma)=0$, and $p(P_2^\sigma)=2$. We have $1=i_+(P_2^\sigma)\geq \frac{2-p(P_2^\sigma)+1}{2}-\theta(P_2^\sigma)=\frac{1}{2}$.
If $n=3$, then $\Gamma\cong P_3^\sigma$ or $C_3^\sigma$ since $\Gamma$ is connected. Suppose that $\Gamma\cong P_3^\sigma$. Then $1=i_+(P_3^\sigma)\geq \frac{3-p(P_3^\sigma)+1}{2}-\theta(P_3^\sigma)=1$ by Lemma \ref{lem-2-5} (iii). Suppose that $\Gamma\cong C_3^\sigma$. Then $1=i_+(C_3^\sigma)\geq \frac{3-p(C_3^\sigma)}{2}-\theta(C_3^\sigma)=\frac{1}{2}$ if $C_3^\sigma$ is balanced, and $2=i_+(C_3^\sigma)\geq \frac{3-p(C_3^\sigma)}{2}-\theta(C_3^\sigma)=\frac{1}{2}$ if $C_3^\sigma$ is unbalanced. Hence it suffice to consider the case $n\geq 4$ as follows and we may assume that the inequalities hold for any connected graph $H^\sigma$ with $2\leq |V(H^\sigma)|\leq n-1$. The following three cases can be distinguished.

{\flushleft\bf Case 1.} $\Gamma$ has no pendant vertices.

Clearly, $\Gamma$ contains at least one cycle, say $C^\sigma$, in this case. Let $x\in V(C^\sigma)$ and $\Gamma-x=H_1^\sigma\cup H_2^\sigma\cup\cdots\cup H_s^\sigma$, where $H_k^\sigma$ $(k=1,\ldots,s)$ are all connected components of $\Gamma-x$. Since $\Gamma$ has no pendant vertices, $2\leq |V(H_k^\sigma)|\leq n-1$ for any $k$. We divide these components into two categories such that $H_i^\sigma$ $(i=1, \ldots, r)$ has pendant vertices and $H_j^\sigma$ $(j=r+1,\ldots,s)$ has no pendant vertices. By applying induction hypothesis to each component $H_i^\sigma$ $(i=1, \ldots, r)$ and $H_j^\sigma$ $(j=r+1,\ldots,s)$, we have
$$ i_+(H_i^\sigma)\geq\frac{n(H_i^\sigma)-p(H_i^\sigma)+1}{2}-\theta(H_i^\sigma),  \ i=1,2,\ldots,r$$
and
$$ i_+(H_j^\sigma)\geq\frac{n(H_j^\sigma)}{2}-\theta(H_j^\sigma),\ j=r+1,\ldots,s.$$
Hence
\begin{align}
i_+(\Gamma)\geq i_+(\Gamma-x)&=\sum_{i=1}^{r}i_+(H_i^\sigma)+\sum_{j=r+1}^{s}i_+(H_j^\sigma)\nonumber\\
&\geq\sum_{i=1}^{r}(\frac{n(H_i^\sigma)-p(H_i^\sigma)+1}{2}-\theta(H_i^\sigma))+\sum_{j=r+1}^{s}(\frac{n(H_j^\sigma)}{2}-\theta(H_j^\sigma))\nonumber\\
&=\sum_{k=1}^{s}\frac{n(H_k^\sigma)}{2}-\sum_{i=1}^{r}\frac{p(H_i^\sigma)}{2}-\sum_{k=1}^{s}\theta(H_k^\sigma)+\frac{r}{2}\nonumber\\
&=\frac{n(\Gamma-x)-p(\Gamma-x)}{2}-\theta(\Gamma-x)+\frac{r}{2}.\nonumber
\end{align}
by Lemma \ref{lem-2-1} and Lemma \ref{lem-2-3}.

Let $m$ be the number of 2-degree neighbors of $x$. Since $\Gamma$ has no pendant vertices, the number of pendant vertices of $\Gamma-x$ equals $m$, that is, $p(\Gamma-x)=\sum_{i=1}^{r}p(H_i^\sigma)=m$. On the other hand, in light of Lemma \ref{lem-2-4} (ii), we have $\theta(\Gamma-x) =\theta(\Gamma)-d(x)+s$. Hence
$$i_+(\Gamma)\geq\frac{n-1-m}{2}-(\theta(\Gamma)-d(x)+s)+\frac{r}{2}=\frac{n}{2}-\theta(\Gamma)+(d(x)-s-\frac{m}{2}+\frac{r}{2}-\frac{1}{2}).$$
Note that $d(x)\geq s+1$ since $x\in V(C^\sigma)$. The inequality $2d(x)\geq 2s+m-r+1$ holds by Lemma \ref{lem-2-4} (i). Hence $d(x)-s-\frac{m}{2}+\frac{r}{2}-\frac{1}{2}\geq 0$, and so
$$i_+(\Gamma)\geq\frac{n}{2}-\theta(\Gamma)=\frac{n-p(\Gamma)}{2}-\theta(\Gamma)$$ because $p(\Gamma)=0$, as desired.

Furthermore, suppose that two distinct cycles $C_1^\sigma$ and $C_2^\sigma$ have common vertices in $\Gamma$. Choose $x$ in such a way that $x\in V(C_1^\sigma)\cap V(C_2^\sigma)$ and $d_{C_1^\sigma\cup C_2^\sigma}(x)\geq 3$, which implies $d(x)\geq s+2$. Hence $2d(x)\geq 2s+m-r+2$ by Lemma \ref{lem-2-4} (i), and so $d(x)-s-\frac{m}{2}+\frac{r}{2}-\frac{1}{2}\geq \frac{1}{2}$. Recall that $p(\Gamma)=0$, the inequality
$$i_+(\Gamma)\geq(\frac{n}{2}-\theta(\Gamma))+\frac{1}{2}=\frac{n-p(\Gamma)+1}{2}-\theta(\Gamma)$$ holds.

{\flushleft\bf Case 2.} $\Gamma$ has at least two pendant vertices.

Suppose that $x$ is one of pendant vertices of $\Gamma$ and let $y$ be the neighbour of $x$. If $d(y)>2$, then $\theta(\Gamma-x)=\theta(\Gamma)$ and $p(\Gamma-x)=p(\Gamma)-1\geq 1$. Using the fact that $i_+(\Gamma)\geq i_+(\Gamma-x)$ by Lemma \ref{lem-2-1}. By applying induction hypothesis to $\Gamma-x$, the inequaliy
\begin{align}
i_+(\Gamma)&\geq i_+(\Gamma-x)\nonumber\\
&\geq \frac{n(\Gamma-x)-p(\Gamma-x)+1}{2}-\theta(\Gamma-x)\nonumber\\
&=\frac{n-p(\Gamma)+1}{2}-\theta(\Gamma)\nonumber
\end{align}
holds.

If $d(y)=2$, then $1\leq p(\Gamma)-1\leq p(\Gamma-x-y)\leq p(\Gamma)$ since $n\geq 4$. Note that $\theta(\Gamma-x-y)=\theta(\Gamma)$. Again by applying induction hypothesis to $\Gamma-x-y$, we have
\begin{align}
i_+(\Gamma)&=i_+(\Gamma-x-y)+1\nonumber\\
&\geq\frac{n(\Gamma-x-y)-p(\Gamma-x-y)+1}{2}-\theta(\Gamma-x-y)+1\nonumber\\
&\geq\frac{n-2-p(\Gamma)+1}{2}-\theta(\Gamma)+1\nonumber\\
&=\frac{n-p(\Gamma)+1}{2}-\theta(\Gamma)\nonumber
\end{align}
by Lemma \ref{lem-2-2}.

{\flushleft\bf Case 3.} $\Gamma$ has one unique pendant vertex.

Suppose that $x$ is the unique pendant vertex of $\Gamma$ and let $y$ be the neighbour of $x$. By virtue of Lemma \ref{lem-2-2}, we have
$$i_+(\Gamma)=i_+(\Gamma-x-y)+1.$$

At first, let us consider the subcase $d(y)=2$ and set $x\neq z\in N(y)$. If $d(z)=2$ in this case, then $p(\Gamma-x-y)=p(\Gamma)=1$, $\theta(\Gamma-x-y)=\theta(\Gamma)$. By applying induction hypothesis to $\Gamma-x-y$, we obtain
\begin{align}
i_+(\Gamma)&=i_+(\Gamma-x-y)+1\nonumber\\
&\geq\frac{n(\Gamma-x-y)-p(\Gamma-x-y)+1}{2}-\theta(\Gamma-x-y)+1\nonumber\\
&=\frac{n-2-p(\Gamma)+1}{2}-\theta(\Gamma)+1=\frac{n-p(\Gamma)+1}{2}-\theta(\Gamma).\nonumber
\end{align}
If $d(z)\geq3$, then $p(\Gamma-x-y)=p(\Gamma)-1=0, \theta(\Gamma-x-y)=\theta(\Gamma)$. In this case, we obtain
\begin{align}
i_+(\Gamma)&=i_+(\Gamma-x-y)+1\nonumber\\
&\geq\frac{n(\Gamma-x-y)-p(\Gamma-x-y)}{2}-\theta(\Gamma-x-y)+1\nonumber\\
&=\frac{n-2-p(\Gamma)+1}{2}-\theta(\Gamma)+1=\frac{n-p(\Gamma)+1}{2}-\theta(\Gamma).\nonumber
\end{align}

Now, it suffice to prove the inequality when $d(y)\geq 3$. Let $(\Gamma-y)-x=H_1^\sigma\cup H_2^\sigma\cup\cdots\cup H_s^\sigma$ be disjoint union of connected signed components of $(\Gamma-y)-x$. By the similar way as Case 1, set $H_i^\sigma$ $(i=1,\cdots,r)$ contains 2-degree neighbors of $y$, and $H_j^\sigma$ $(j=r+1,\cdots,s)$ contains no 2-degree neighbors of $y$. Since $x$ is the unique pendant vertex of $\Gamma$, we have $2 \leq |V(H_k^\sigma)|\leq n-1$ for each $k\in \{1,\ldots, s\}$. By applying induction hypothesis to each component $H_k$, the following inequalities hold:
$$i_+(H_i^\sigma)\geq\frac{n(H_i^\sigma)-p(H_i^\sigma)+1}{2}-\theta(H_i^\sigma),\ i=1,2,\ldots,r,$$
and
$$i_+(H_j^\sigma)\geq\frac{n(H_j^\sigma)}{2}-\theta(H_j^\sigma),\ j=r+1,\ldots,s.$$

It is clear that $p((\Gamma-y)-x)=\sum_{i=1}^{r}p(H_i^\sigma)=m$ holds, where $m$ denotes the number of 2-degree neighbors of $y$ in $\Gamma$. Hence
\begin{align}
i_+(\Gamma)&=i_+((\Gamma-y)-x)+1=\sum_{i=1}^{r}i_+{(H_i^\sigma)}+\sum_{j=r+1}^{s}i_+{(H_j^\sigma)}+1\nonumber\\
&\geq\sum_{i=1}^{r}(\frac{n(H_i^\sigma)-p(H_i^\sigma)+1}{2}-\theta(H_i^\sigma))+\sum_{j=r+1}^{s}(\frac{n(H_j^\sigma)}{2}-\theta(H_j^\sigma))+1\nonumber\\
&=\sum_{k=1}^{s}\frac{n(H_k^\sigma)}{2}-\sum_{i=1}^{r}\frac{p(H_i^\sigma)}{2}+\frac{r}{2}-\sum_{k=1}^{s}\theta(H_k^\sigma)+1\nonumber\\
&=\frac{n-2-m}{2}-\theta((\Gamma-y)-x)+\frac{r}{2}+1\nonumber.
\end{align}
by Lemma \ref{lem-2-3}.

Note that $y$ must belong to some cycle of $\Gamma$ because $d(y)\geq 3$ and $x$ is unique pendant vertex of $G$. We have $d(y)\geq s+1$. Hence $2d(y)\geq 2s+m-r+1$ by Lemma 2.4 (i). On the other hand, from Lemma \ref{lem-2-4} (ii), one may deduce the equality $\theta((\Gamma-y)-x)=\theta(\Gamma-y)=\theta(\Gamma)-d(y)+s$. Consequently,
\begin{align}
i_+(\Gamma)&\geq(\frac{n+1}{2}-\theta(\Gamma))+(d(y)-s-\frac{m}{2}+\frac{r}{2}-\frac{1}{2})\nonumber\\
&\geq\frac{n-p(\Gamma)+1}{2}-\theta(\Gamma)\nonumber
\end{align}
since $p(\Gamma)=0$. The theorem follows by induction.
\end{proof}

\begin{remark}\label{re-1}
\emph{For a disconnected signed graph $\Gamma$, in light of Lemma \ref{lem-2-3} and the additive property of $p(\Gamma)$ and $\theta(\Gamma)$, the inequalities of Theorem \ref{thm-3-1} can therefore be restricted or reduced to each connected signed component of $\Gamma$. Hence these inequalities also hold for disconnected signed graphs.}
\end{remark}

Let $\Gamma$ be a cycle-disjoint signed graph, and $C_1^\sigma, \ldots, C_k^\sigma$ be all induced signed cycles of $\Gamma$. In general, the signed cycles of $\Gamma$ fit together in a treelike structure. By $\Gamma\setminus \{C_1^\sigma,\ldots, C_k^\sigma\}$ we denote the graph obtained from $\Gamma$ by \emph{contracting} each $C_i^\sigma$ $(i=1,\ldots,k)$ into a new vertex $v_{C_i^\sigma}$. Moreover, $v_{C_i^\sigma}$ is adjacent to, except for the former neighbours in $\Gamma$, the vertex $v_{C_j^\sigma}$ for which some vertices of $C_i^\sigma$ are adjacent the vertex of $C_j^\sigma$ in $\Gamma$. Clearly, $\Gamma\setminus \{C_1^\sigma,\ldots, C_k^\sigma\}$ must be a tree since $\Gamma$ is cycle-disjoint. For example, as illustrated in Fig. \ref{fig-1}, for a cycle-disjoint signed graph $\Gamma$, we provide the tree $\Gamma\setminus \{C_1^\sigma,\ldots,C_6^\sigma\}$, which can be denoted by $T_{\Gamma}$. At the end of this section, by using the tree structure introduced above, we will give the characterization of signed graph $\Gamma$ with $i_+(\Gamma)=\frac{n-p(\Gamma)}{2}-\theta(\Gamma)$.

\begin{figure}[t]
    \centering
    \includegraphics[width=0.75\linewidth]{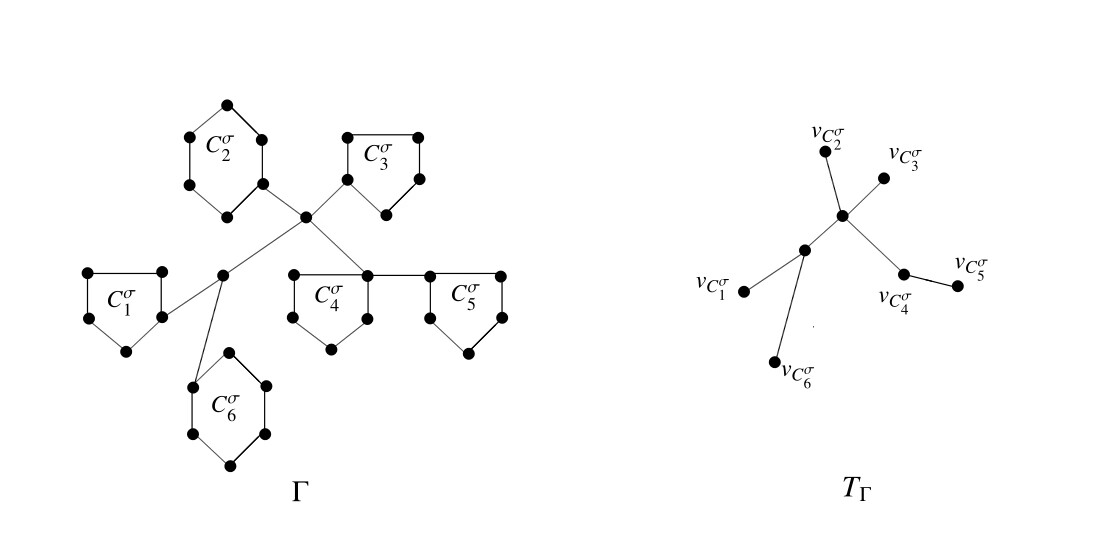}
    \caption{The tree $T_{\Gamma}$ corresponding to cycle-disjoint signed graph $\Gamma$.}
    \label{fig-1}
    \end{figure}

\begin{thm}\label{thm-3-2}
Let $\Gamma$ be a signed graph of order $n$ and each connected component of $\Gamma$ has at least two vertices. Then $i_+(\Gamma)=\frac{n-p(\Gamma)}{2}-\theta(\Gamma)$ if and only if $\Gamma\cong C_{n_1}^\sigma\cup C_{n_2}^\sigma\cup\cdots \cup C_{n_t}^\sigma$, where $n_1+n_2+\cdots+n_t=n$, and $n_i\equiv 0(\bmod\ 4)$ if $C_{n_i}^\sigma$ is balanced or $n_i\equiv 2(\bmod\ 4)$ if $C_{n_i}^\sigma$ is unbalanced for $i\in \{1,2,\ldots,t\}$.
\end{thm}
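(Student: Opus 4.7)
The plan is to prove the two directions of the biconditional separately. For sufficiency, if $\Gamma \cong \bigcup_{i=1}^{t} C_{n_i}^\sigma$ with each $C_{n_i}^\sigma$ either balanced with $n_i \equiv 0 \pmod{4}$ or unbalanced with $n_i \equiv 2 \pmod{4}$, then Lemma \ref{lem-2-5} gives $i_+(C_{n_i}^\sigma) = n_i/2 - 1$ in every case. Since $p(\Gamma) = 0$ and $\theta(\Gamma) = t$, Lemma \ref{lem-2-3} yields $i_+(\Gamma) = \sum_{i=1}^{t}(n_i/2 - 1) = n/2 - t = (n - p(\Gamma))/2 - \theta(\Gamma)$, the required equality.

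For necessity I would proceed in three successive reductions. First, since $i_+$, $p$ and $\theta$ are all additive over connected components (Lemma \ref{lem-2-3} together with the additivity noted in Remark \ref{re-1}), the overall inequality $i_+(\Gamma) \geq (n - p(\Gamma))/2 - \theta(\Gamma)$ is a sum of the per-component inequalities from Theorem \ref{thm-3-1}, so equality forces simultaneous equality in every connected component. It therefore suffices to classify connected signed graphs of order at least two achieving the bound. Second, invoke the strengthened clause of Theorem \ref{thm-3-1}: if $p(\Gamma) \geq 1$, or $p(\Gamma) = 0$ but two distinct cycles of $\Gamma$ share a vertex, then $i_+(\Gamma) \geq (n - p(\Gamma) + 1)/2 - \theta(\Gamma)$, strictly larger than $(n - p(\Gamma))/2 - \theta(\Gamma)$. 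So at equality we must have $p(\Gamma) = 0$ together with the cycles of $\Gamma$ being pairwise vertex-disjoint. This forces every block of $\Gamma$ to be a bridge or a cycle; moreover a leaf block of the block-cut tree cannot be a bridge (otherwise its non-cut endpoint would be a pendant), so every leaf block is a cycle.

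The third and central step shows that $\theta(\Gamma) = 1$, so the connected $\Gamma$ is a single signed cycle. Assume for contradiction that $\theta(\Gamma) \geq 2$ and pick a leaf cycle $L = v\,a_1\,a_2\cdots a_k\,v$ attached to the rest of $\Gamma$ at its unique cut vertex $v$. Each $a_j$ has degree exactly two in $\Gamma$ because it is not a cut vertex. Delete $x := a_1$. The graph $\Gamma - x$ is still connected (the remainder of $L$ is a path ending at $v$, which is still joined to everything else), the vertex $a_2$ becomes a pendant, the degree of $v$ drops by one but stays at least two, and no other degree changes. Hence $p(\Gamma - x) = 1$ and $\theta(\Gamma - x) = \theta(\Gamma) - 1$. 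The strong part of Theorem \ref{thm-3-1} applied to $\Gamma - x$ gives
$$i_+(\Gamma - x) \geq \frac{(n-1) - 1 + 1}{2} - (\theta(\Gamma) - 1) = \frac{n+1}{2} - \theta(\Gamma),$$
and combining with Lemma \ref{lem-2-1} this yields $i_+(\Gamma) \geq (n+1)/2 - \theta(\Gamma) > (n - p(\Gamma))/2 - \theta(\Gamma)$, a contradiction. Hence $\theta(\Gamma) = 1$, so $\Gamma$ is a single signed cycle $C_n^\sigma$, and the exact residue of $n$ modulo $4$ together with the sign is read off directly from Lemma \ref{lem-2-5}: the equation $i_+(C_n^\sigma) = n/2 - 1$ forces $n$ even, and the four cases of that lemma then pin down exactly the two allowed combinations.

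The main obstacle is this third step, which carries all of the structural rigidity of the characterization. The key local trick is that deleting a single degree-two vertex adjacent to the cut vertex of a leaf cycle simultaneously destroys one cycle and creates exactly one pendant; this is just enough to activate the stronger clause of Theorem \ref{thm-3-1} and recover the half-unit gap needed to contradict equality. Without that strengthened bound there would be no margin to absorb the loss of one unit of cyclomatic number, and the weak inequality alone would not suffice to rule out arbitrary tree-of-cycles configurations.
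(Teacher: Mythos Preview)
Your proof is correct and follows essentially the same route as the paper: sufficiency by direct computation via Lemma~\ref{lem-2-5}, then necessity by reducing to components, invoking the strengthened clause of Theorem~\ref{thm-3-1} to force $p=0$ and vertex-disjoint cycles, and finally deleting a degree-two neighbor of the attaching vertex of a leaf cycle to trigger the $+\tfrac{1}{2}$ gap and derive a contradiction. The only cosmetic difference is that you phrase the leaf-cycle selection via the block--cut tree, whereas the paper uses the cycle-contraction tree $T_\Gamma$; the two auxiliary structures identify the same leaf cycle and the deletion argument is identical.
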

\begin{proof}
Suppose that $\Gamma\cong C_{n_1}^\sigma\cup C_{n_2}^\sigma\cup \cdots \cup C_{n_t}^\sigma$, where $n_1+n_2+\cdots+n_t=n$ and $n_i\equiv 0(\bmod\ 4)$ if $C_{n_i}^\sigma$ is balanced or $n_i\equiv 2(\bmod\ 4)$ if $C_{n_i}^\sigma$ is unbalanced. Then $i_+(\Gamma)=(\frac{n_1}{2}-1)+(\frac{n_2}{2}-1)+\cdots+ (\frac{n_t}{2}-1)=\frac{n}{2}-t$ by Lemma \ref{lem-2-5}. Since $p(\Gamma)=0$ and $\theta(\Gamma)=t$, the equality $i_+(\Gamma)=\frac{n-p(\Gamma)}{2}-\theta(\Gamma)$ holds.

Conversely, let $\Gamma$ be a signed graph satisfying $i_+(\Gamma)=\frac{n-p(\Gamma)}{2}-\theta(\Gamma)$. Then $i_+(H_i^\sigma)=\frac{n_i-p(H_i^\sigma)}{2}-\theta(H_i^\sigma)$ for each connected signed component $H_i^\sigma$ of $\Gamma$, where $n_i=n(H_i^\sigma)$. Clearly, by virtue of Remark \ref{re-1}, we assert that $p(H_i^\sigma)=0$ and any distinct cycles of $H_i^\sigma$ share no vertices in common. Our first task is to demonstrate $H_i^\sigma\cong C_{n_i}^\sigma$. On the contrary, assume that $H_i^\sigma$ contains at least two signed cycles. Contracting each induced signed cycle of $H_i^\sigma$ into a new vertex, we obtain a tree $T_{H_i^\sigma}$ by the above assertion. Hence there exists a signed cycle $C^\sigma$ of $H_i^\sigma$, which corresponding to a pendant vertex of $T_{H_i^\sigma}$, such that exactly one vertex $x$ in $V(C^\sigma)$ satisfies $d(x)=3$.

Let $y$ be a neighbor of $x$ in $C^\sigma$. Then $H_i^\sigma-y$ has exactly one pendant vertex and $\theta(H_i^\sigma-y)=\theta(H_i^\sigma)-1$. Since $p(H_i^\sigma)=0$, we have
\begin{align}
i_+(H_i^\sigma)\geq i_+(H_i^\sigma-y)&\geq\frac{n(H_i^\sigma-y)-p(H_i^\sigma-y)+1}{2}-\theta(H_i^\sigma-y)\nonumber\\
&=\frac{n_i-1-1+1}{2}-\theta(H_i^\sigma)+1\nonumber\\
&=(\frac{n_i-p(H_i^\sigma)}{2}-\theta(H_i^\sigma))+\frac{1}{2}\nonumber
\end{align}
by Lemma \ref{lem-2-1} and Theorem \ref{thm-3-1}. This contradicts the assumption $i_+(H_i^\sigma)=\frac{n_i-p(H_i^\sigma)}{2}-\theta(H_i^\sigma)$. Hence $H_i^\sigma$ has exactly one induced signed cycle, and so $H_i^\sigma\cong C_{n_i}^\sigma$ because of $p(H_i^\sigma)=0$. Recall that $i_+(H_i^\sigma)=\frac{n_i-p(H_i^\sigma)}{2}-\theta(H_i^\sigma)=\frac{n_i}{2}-1$. We obtain that $n_i\equiv 0(\bmod\ 4)$ if $H_i^\sigma$ is balanced or $n_i\equiv 2(\bmod\ 4)$ if $H_i^\sigma$ is unbalanced by Lemma \ref{lem-2-5}.

\end{proof}

\section{Negative inertia index and nullity of signed graphs with given cyclomatic number and given number of pendant vertices}
By replacing the signature $\sigma$ of $\Gamma=(G, \sigma)$ with $-\sigma$, one can obtain a signed graph $-\Gamma=(G, -\sigma)$ known as the negation of $\Gamma$. By elementary linear algebra, the negative inertia index of $\Gamma$ is equal to the positive inertia index of $-\Gamma$. Hence the following Theorems \ref{thm-4-1} and \ref{thm-4-2} in this draft is an immediate consequence of Theorems \ref{thm-3-1} and \ref{thm-3-2}. To better understand these two theorems, we also provide a brief proof process. But the proofs of results about $i_-(\Gamma)$ are similar to that of $i_+(\Gamma)$, we will omit some similar steps.

\begin{thm}\label{thm-4-1}
Let $\Gamma$ be a connected signed graph of order $n\geq 2$. Then $i_-(\Gamma)\geq \frac{n-p(\Gamma)}{2}-\theta(\Gamma)$. Moreover, $i_-(\Gamma)\geq \frac{n-p(\Gamma)+1}{2}-\theta(\Gamma)$ if $p(\Gamma)\geq 1$, or $p(\Gamma)=0$ and two distinct cycles of $\Gamma$ have common vertices.
\end{thm}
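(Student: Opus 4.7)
The plan is to reduce Theorem \ref{thm-4-1} to Theorem \ref{thm-3-1} via the negation construction introduced at the opening of this section, rather than redo the entire inductive argument. Writing $-\Gamma=(G,-\sigma)$, the adjacency matrix satisfies $A(-\Gamma)=-A(\Gamma)$, so the eigenvalues of $-\Gamma$ are precisely the negatives of those of $\Gamma$. Consequently $i_-(\Gamma)=i_+(-\Gamma)$. Since $\Gamma$ and $-\Gamma$ share the same underlying graph $G$, the quantities $n$, $p(\Gamma)=p(-\Gamma)$, and $\theta(\Gamma)=\theta(-\Gamma)$ are identical for the two signed graphs, and $-\Gamma$ is connected whenever $\Gamma$ is. Applying Theorem \ref{thm-3-1} to $-\Gamma$ then yields
\[
i_-(\Gamma)=i_+(-\Gamma)\;\geq\;\frac{n-p(-\Gamma)}{2}-\theta(-\Gamma)\;=\;\frac{n-p(\Gamma)}{2}-\theta(\Gamma),
\]
which is the first inequality claimed.

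For the strengthened inequality in the \emph{moreover} clause, I would observe that the two alternative hypotheses, namely $p(\Gamma)\geq 1$, or $p(\Gamma)=0$ together with two distinct cycles of $\Gamma$ sharing a vertex, are properties of the underlying graph $G$ alone and are therefore insensitive to the signature. They transfer verbatim to $-\Gamma$, so the improved bound in Theorem \ref{thm-3-1} applied to $-\Gamma$ delivers $i_-(\Gamma)\geq\frac{n-p(\Gamma)+1}{2}-\theta(\Gamma)$. This is the cleanest route and is essentially forced by the remark the authors themselves make about negation; the only subtle point to highlight is the invariance of $n$, $p$, and $\theta$ under $\sigma\mapsto-\sigma$.

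If instead a self-contained direct proof were wanted, I would mirror the induction in Theorem \ref{thm-3-1} step for step, since every auxiliary tool is sign-symmetric: Lemma \ref{lem-2-1} (interlacing) and Lemma \ref{lem-2-3} (component additivity) apply to $i_-$ verbatim, Lemma \ref{lem-2-2} provides the same ``$+1$'' drop for $i_-$ upon removing a pendant and its neighbor, and Lemma \ref{lem-2-4} is purely combinatorial. The base cases $n=2,3$ reduce to $P_2^\sigma$, $P_3^\sigma$, and $C_3^\sigma$, all of which satisfy $i_-\geq 1$ by Lemma \ref{lem-2-5}(ii),(iii). The main obstacle in such a direct approach would be purely notational---avoiding a verbatim duplication of Case 1--Case 3---which is exactly why the negation shortcut is preferable. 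I would therefore present the negation proof as the body and mention the symmetry of the inductive argument only parenthetically.
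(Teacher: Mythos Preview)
Your proof via negation is correct and is precisely the shortcut the paper itself flags in the paragraph preceding Theorem~\ref{thm-4-1}: since $A(-\Gamma)=-A(\Gamma)$, one has $i_-(\Gamma)=i_+(-\Gamma)$, and every hypothesis in both the basic and the strengthened inequality depends only on the underlying graph, so Theorem~\ref{thm-3-1} applied to $-\Gamma$ gives the result immediately.

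The paper, however, does not actually take this route in its written proof. Instead it reruns the induction of Theorem~\ref{thm-3-1} in abbreviated form, splitting again into Cases~1--3 (no pendant vertex; at least two pendant vertices; exactly one pendant vertex) and invoking Lemmas~\ref{lem-2-1}--\ref{lem-2-4} with $i_-$ in place of $i_+$ throughout. Your approach is strictly shorter and avoids the notational duplication you yourself identified as the main cost of a direct argument; the paper's approach is self-contained in the sense that a reader can follow it without flipping back to the $i_+$ proof, which the authors explicitly say is their reason for including it. Substantively there is no difference---every inequality used is sign-symmetric---so your reduction is the cleaner choice and the paper's own preamble concedes as much.
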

\begin{proof}
We also prove this theorem by induction on $n$. Firstly, the inequalities hold for $n=2$ or $n=3$ obviously. Hence let $n\geq 4$ and we discuss the following three cases.

{\flushleft\bf Case 1.} $\Gamma$ has no pendant vertices.

$\Gamma$ must contain an induced signed cycle $C^\sigma$ in this case. Let $x\in V(C^\sigma)$ and $\Gamma-x=H_1^\sigma\cup H_2^\sigma\cup\cdots\cup H_s^\sigma$, where $H_k^\sigma$ $(k=1,\ldots,s)$ are all connected components of $\Gamma-x$. Then $|V(H_k^\sigma)|\geq 2$ since $\Gamma$ has no pendant vertices. Set $H_i^\sigma$ $(i=1, \ldots, r)$ has pendant vertex and $H_j^\sigma$ $(j=r+1,\ldots,s)$ has no pendant vertices. By applying induction hypothesis to $H_i^\sigma$ and $H_j^\sigma$, we have
$i_-(H_i^\sigma)\geq\frac{n(H_i^\sigma)-p(H_i^\sigma)+1}{2}-\theta(H_i^\sigma)$ and $i_-(H_j^\sigma)\geq\frac{n(H_j^\sigma)}{2}-\theta(H_j^\sigma)$. Hence
$$i_-(\Gamma)\geq i_-(\Gamma-x)=\sum_{k=1}^{s}i_-(H_k^\sigma)\geq\frac{n(\Gamma-x)-p(\Gamma-x)}{2}-\theta(\Gamma-x)+\frac{r}{2}$$
by Lemmas \ref{lem-2-1} and \ref{lem-2-3}. Note that $d(x)\geq s+1$ since $x\in V(C^\sigma)$. We have $\theta(\Gamma-x)=\theta(\Gamma)-d(x)+s$ and $2d(x)\geq 2s+m-r+1$ by Lemma \ref{lem-2-4}. Hence $i_-(\Gamma)\geq\frac{n-p(\Gamma)}{2}-\theta(G)$ since $p(\Gamma)=0$.

Furthermore, if two distinct cycles $C_1^\sigma$ and $C_2^\sigma$ of $\Gamma$ have common vertices, then there exists $x\in V(C_1^\sigma)\cap V(C_2^\sigma)$ such that $d(x)\geq s+2$. Hence $2d(x)\geq 2s+m-r+2$ by Lemma \ref{lem-2-4} (i). Similar to positive inertia index, we have $i_-(\Gamma)\geq\frac{n-p(\Gamma)+1}{2}-\theta(\Gamma)$.

{\flushleft\bf Case 2.} $\Gamma$ has at least two pendant vertices.

Let $x\in V(\Gamma)$ be a pendant vertex and $N(x)=\{y\}$. Suppose that $d(y)>2$. By applying induction hypothesis to $\Gamma-x$, we have
$$i_-(\Gamma)\geq i_-(\Gamma-x)\geq\frac{n(\Gamma-x)-p(\Gamma-x)+1}{2}-\theta(\Gamma-x)=\frac{n-p(\Gamma)+1}{2}-\theta(\Gamma)$$ by Lemma \ref{lem-2-1}. Suppose that $d(y)=2$. By applying induction hypothesis to $\Gamma-x-y$, we have
$$i_-(\Gamma)=i_-(\Gamma-x-y)+1\geq\frac{n(\Gamma-x-y)-p(\Gamma-x-y)+1}{2}-\theta(\Gamma-x-y)+1\geq\frac{n-p(\Gamma)+1}{2}-\theta(\Gamma).$$
by Lemma \ref{lem-2-2}.

{\flushleft\bf Case 3.} $\Gamma$ has one unique pendant vertex.

Let $x$ be the unique pendant vertex of $\Gamma$ and $N(x)=\{y\}$. Suppose that $d(y)=2$ and set $z\in N(y)$. Then $\Gamma-x-y$ has a pendant vertex if $d(z)=2$, and $\Gamma-x-y$ has no pendant vertices if $d(z)\geq 3$. By applying induction hypothesis to $\Gamma-x-y$ in both cases, we obtain
$i_-(\Gamma)=i_-(\Gamma-x-y)+1\geq\frac{n-p(\Gamma)+1}{2}-\theta(\Gamma)$ by Lemma \ref{lem-2-2}.

Suppose that $d(y)\geq 3$ and let $\Gamma-y-x=H_1^\sigma\cup H_2^\sigma\cup\cdots\cup H_s^\sigma$. Set $H_i^\sigma$ $(i=1,\ldots,r)$ contains 2-degree neighbors of $y$, and $H_j^\sigma$ $(j=r+1,\ldots,s)$ contains no 2-degree neighbors of $y$. By applying induction hypothesis to $H_i^\sigma$ and $H_j^\sigma$, we have
$i_-(H_i^\sigma)\geq\frac{n(H_i^\sigma)-p(H_i^\sigma)+1}{2}-\theta(H_i^\sigma)$ and $i_-(H_j^\sigma)\geq\frac{n(H_j^\sigma)}{2}-\theta(H_j^\sigma)$.
Since $p(\Gamma-y-x)=\sum_{i=1}^{r}p(H_i^\sigma)=m$, where $m$ denotes the number of 2-degree neighbors of $y$ in $G$, we obtain
$$i_-(\Gamma)=i_-(\Gamma-y-x)+1=\frac{n-2-m}{2}-\theta(\Gamma-y-x)+\frac{r}{2}+1$$
by Lemmas \ref{lem-2-2} and \ref{lem-2-3}. According to Lemma \ref{lem-2-4}, the inequalities $2d(y)\geq 2s+m-r+1$ and $\theta(\Gamma-y-x)=\theta(\Gamma)-d(y)+s$ holds since $d(y)\geq s+1$. Therefore,
$i_-(\Gamma)\geq(\frac{n+1}{2}-\theta(\Gamma))+(d(y)-s-\frac{m}{2}+\frac{r}{2}-\frac{1}{2})\geq\frac{n-p(\Gamma)+1}{2}-\theta(\Gamma)$.
\end{proof}

\begin{remark}\label{re-2}
Similar to the positive inertia index, Theorem \ref{thm-4-1} also holds for disconnected signed graphs.
\end{remark}

\begin{thm}\label{thm-4-2}
Let $\Gamma$ be a signed graph of order $n$ and each connected component of $\Gamma$ has at least two vertices. Then $i_-(\Gamma)=\frac{n-p(\Gamma)}{2}-\theta(\Gamma)$ if and only if $\Gamma\cong C_{n_1}^\sigma\cup C_{n_2}^\sigma\cup\cdots \cup C_{n_t}^\sigma$, where $n_1+n_2+\cdots+n_t=n$, and $n_i\equiv 0(\bmod\ 4)$ if $C_{n_i}^\sigma$ is balanced or $n_i\equiv 2(\bmod\ 4)$ if $C_{n_i}^\sigma$ is unbalanced for $i\in \{1,2,\ldots,t\}$.
\end{thm}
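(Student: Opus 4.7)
The plan is to mirror the proof of Theorem \ref{thm-3-2} with $i_+$ replaced by $i_-$ throughout, appealing to Theorem \ref{thm-4-1} and Remark \ref{re-2} in place of Theorem \ref{thm-3-1} and Remark \ref{re-1}. The key observation that makes this a word-for-word adaptation is that in Lemma \ref{lem-2-5} the cycle values $i_-(C_n^\sigma) = n/2 - 1$ are attained under exactly the same parity/sign conditions as $i_+(C_n^\sigma) = n/2 - 1$, namely $C_n^\sigma$ balanced with $n \equiv 0 \pmod 4$ or $C_n^\sigma$ unbalanced with $n \equiv 2 \pmod 4$.

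For sufficiency, I would just compute: if $\Gamma \cong C_{n_1}^\sigma \cup \cdots \cup C_{n_t}^\sigma$ with the stated congruence/balance conditions, Lemma \ref{lem-2-5} gives $i_-(C_{n_i}^\sigma) = \frac{n_i}{2} - 1$, so by Lemma \ref{lem-2-3} we get $i_-(\Gamma) = \frac{n}{2} - t$. Since $p(\Gamma) = 0$ and $\theta(\Gamma) = t$, this equals $\frac{n - p(\Gamma)}{2} - \theta(\Gamma)$.

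For necessity, suppose $i_-(\Gamma) = \frac{n - p(\Gamma)}{2} - \theta(\Gamma)$. Invoking Remark \ref{re-2} together with the additivity of $p$ and $\theta$ across components, the equality must also hold on each connected signed component $H_i^\sigma$ of $\Gamma$. Theorem \ref{thm-4-1} (the strict versions of the bound) forces $p(H_i^\sigma) = 0$ and forbids any two distinct cycles in $H_i^\sigma$ from sharing a vertex, so each $H_i^\sigma$ is cycle-disjoint and pendant-free. The next step is to rule out the possibility that $H_i^\sigma$ contains more than one induced signed cycle: contract each induced signed cycle of $H_i^\sigma$ to a vertex to obtain the tree $T_{H_i^\sigma}$; if this tree had more than one vertex, there would be a pendant vertex corresponding to some induced signed cycle $C^\sigma \subseteq H_i^\sigma$ having exactly one vertex $x$ with $d_{H_i^\sigma}(x) = 3$. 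Picking a neighbor $y$ of $x$ on $C^\sigma$, the graph $H_i^\sigma - y$ then has exactly one pendant vertex and satisfies $\theta(H_i^\sigma - y) = \theta(H_i^\sigma) - 1$, so Lemma \ref{lem-2-1} combined with the strict case of Theorem \ref{thm-4-1} gives
\[
i_-(H_i^\sigma) \geq i_-(H_i^\sigma - y) \geq \frac{n_i - 1 - 1 + 1}{2} - (\theta(H_i^\sigma) - 1) = \frac{n_i - p(H_i^\sigma)}{2} - \theta(H_i^\sigma) + \tfrac{1}{2},
\]
contradicting the equality on $H_i^\sigma$. Hence $H_i^\sigma \cong C_{n_i}^\sigma$, and the remaining equality $i_-(C_{n_i}^\sigma) = \frac{n_i}{2} - 1$ pins down the balance/congruence type of each cycle via Lemma \ref{lem-2-5}(i)(ii).

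The only genuine thinking is the cycle-removal step in Case $1$ (the vertex $x$ on $V(C^\sigma)$) and the pendant-vertex step for the tree contraction, both of which are structurally identical to what appears in Theorem \ref{thm-3-2}; I expect no real obstacle, so the proof can be written briefly, omitting the steps that are verbatim repetitions of those in Theorem \ref{thm-3-2}.
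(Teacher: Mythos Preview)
Your proposal is correct and follows essentially the same approach as the paper's own proof: reduce to connected components via additivity, use the strict cases of Theorem \ref{thm-4-1} to force $p(H_i^\sigma)=0$ and cycle-disjointness, then apply the tree-contraction/pendant-cycle argument (deleting a neighbor $y$ of the attaching vertex $x$) to obtain a contradiction unless $H_i^\sigma\cong C_{n_i}^\sigma$, and finish with Lemma \ref{lem-2-5}. The paper's write-up is simply a condensed version of what you outline, omitting the explicit reference to $T_{H_i^\sigma}$ but using the identical vertex-deletion inequality.
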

\begin{proof}
The sufficiency holds obviously by Lemma \ref{lem-2-5}. We prove the necessity as follows. It only need to consider the connected component $H_i^\sigma$ of $\Gamma$ which satisfies $i_-(H_i^\sigma)=\frac{n_i-p(H_i^\sigma)}{2}-\theta(H_i^\sigma)$, where $n_i=n(H_i^\sigma)$. According to Theorem \ref{thm-4-1}, we assert that $p(H_i^\sigma)=0$ and any distinct cycles of $H_i^\sigma$ share no vertices in common. Firstly, we claim $H_i^\sigma\cong C_{n_i}^\sigma$ as follows. On the contrary, assume that $H_i^\sigma$ contains at least two cycles. Then there exists a signed cycle $C^\sigma$ of $H_i^\sigma$ such that exactly one vertex, say $x\in V(C^\sigma)$, is the only vertex with $d(x)=3$. Let $y$ be a neighbor of $x$ in $C^\sigma$. Then $H_i^\sigma-y$ has exactly one pendant vertex and $\theta(H_i^\sigma-y)=\theta(H_i^\sigma)-1$. Note that $p(H_i^\sigma)=0$, we have
$$i_-(H_i^\sigma)\geq i_-(H_i^\sigma-y)=(\frac{n-p(H_i^\sigma)}{2}-\theta(H_i^\sigma))+\frac{1}{2}$$
by Lemma \ref{lem-2-1} and Theorem \ref{thm-4-1}, a contradiction. Hence $H_i^\sigma\cong C_{n_i}^\sigma$, and so $n_i\equiv 0\ (\bmod\ 4)$ if $H_i^\sigma$ is balanced or $n_i\equiv 2\ (\bmod\ 4)$ if $H_i^\sigma$ is unbalanced by Lemma \ref{lem-2-5}.
\end{proof}

By using the inequalities of positive and negative inertia indices of a signed $\Gamma$ shown in Theorems \ref{thm-3-1} and \ref{thm-4-1}, we can obtain following corollary about the nullity of $\Gamma$, which contains main conclusions in \cite{Xiaobin.Ma}.

\begin{cor}\label{cor-4-1}
Let $\Gamma$ be a signed graph of order $n\geq 2$. Then $\eta(\Gamma)\leq p(\Gamma)+2\theta(\Gamma)$. Moreover, $\eta(\Gamma)\leq p(\Gamma)+2\theta(\Gamma)-1$ if $p(\Gamma)\geq 1$, or $p(\Gamma)=0$ and two distinct cycles of $G$ have common vertices.
\end{cor}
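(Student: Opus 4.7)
The plan is immediate: the corollary is essentially a one-line consequence of Theorems \ref{thm-3-1} and \ref{thm-4-1} (and their disconnected extensions in Remarks \ref{re-1} and \ref{re-2}), combined with the elementary identity $\eta(\Gamma)=n-i_+(\Gamma)-i_-(\Gamma)$, which follows from $r(\Gamma)=i_+(\Gamma)+i_-(\Gamma)$ and $r(\Gamma)+\eta(\Gamma)=n$ stated in the introduction.

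Concretely, I would first write down the two bounds $i_+(\Gamma)\geq \frac{n-p(\Gamma)}{2}-\theta(\Gamma)$ and $i_-(\Gamma)\geq \frac{n-p(\Gamma)}{2}-\theta(\Gamma)$ supplied by Theorem \ref{thm-3-1} with Remark \ref{re-1} and by Theorem \ref{thm-4-1} with Remark \ref{re-2}, respectively. Adding them yields $i_+(\Gamma)+i_-(\Gamma)\geq n-p(\Gamma)-2\theta(\Gamma)$, and substituting into the nullity identity gives $\eta(\Gamma)\leq p(\Gamma)+2\theta(\Gamma)$. This establishes the first assertion.

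For the strengthened inequality, I would invoke the sharpened forms of Theorems \ref{thm-3-1} and \ref{thm-4-1}: under either $p(\Gamma)\geq 1$ or $p(\Gamma)=0$ with two distinct cycles sharing a vertex, both $i_+(\Gamma)$ and $i_-(\Gamma)$ are bounded below by $\frac{n-p(\Gamma)+1}{2}-\theta(\Gamma)$. Summing these and reapplying the nullity identity produces exactly $\eta(\Gamma)\leq p(\Gamma)+2\theta(\Gamma)-1$.

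The only step that deserves a brief remark, and is the closest thing to a real obstacle, is the disconnected case of the strict inequality. The triggering feature—either a pendant vertex or two overlapping cycles—lives inside some single connected component $\Gamma_0$; that component then contributes the improved bound for both $i_+$ and $i_-$, while the remaining components still satisfy at least the basic bound of Theorems \ref{thm-3-1} and \ref{thm-4-1}. Since $i_\pm$, $p$, and $\theta$ are all additive across connected components (Lemma \ref{lem-2-3} and the additivity noted in Remark \ref{re-1}), the half-unit strengthening in $i_+$ and the half-unit strengthening in $i_-$ combine to a full unit improvement in $\eta$. No further work is required.
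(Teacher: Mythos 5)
Your proposal is correct and follows essentially the same route as the paper: both combine the lower bounds on $i_+(\Gamma)$ and $i_-(\Gamma)$ from Theorems \ref{thm-3-1} and \ref{thm-4-1} (extended to disconnected graphs via Remarks \ref{re-1} and \ref{re-2}) with the identity $\eta(\Gamma)=n-i_+(\Gamma)-i_-(\Gamma)$. Your explicit handling of the disconnected case for the sharpened inequality is a slightly more careful spelling-out of what the paper delegates to the remarks, but it is not a different argument.
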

\begin{proof}
Let $\Gamma$ be a signed graph of order $n\geq 2$. In light of Remark \ref{re-1} and Remark \ref{re-2}, we have
$$r(\Gamma)=i_+(\Gamma)+i_-(\Gamma)\geq(\frac{n-p(\Gamma)}{2}-\theta(\Gamma))+(\frac{n-p(\Gamma)}{2}-\theta(\Gamma))=n-p(\Gamma)-2\theta(\Gamma).$$
Therefore,
$$\eta(\Gamma)=n-r(\Gamma)\leq n-(n-p(\Gamma)-2\theta(\Gamma))=p(\Gamma)+2\theta(\Gamma).$$

Furthermore, if $p(\Gamma)\geq 1$, or $p(\Gamma)=0$ and two distinct cycles of $\Gamma$ have common vertices, then

$$r(\Gamma)=i_+(\Gamma)+i_-(\Gamma)\geq(\frac{n-p(\Gamma)+1}{2}-\theta(\Gamma))+(\frac{n-p(\Gamma)+1}{2}-\theta(\Gamma))=n-p(\Gamma)-2\theta(\Gamma)+1.$$
Hence
$$\eta(\Gamma)=n-r(\Gamma)\leq n-(n-p(\Gamma)-2\theta(\Gamma)+1)=p(\Gamma)+2\theta(\Gamma)-1.$$
This complete the proof.
\end{proof}

\begin{cor}\label{cor-4-2}
Let $\Gamma$ be a signed graph of order $n$ and each connected component of $\Gamma$ has at least two vertices. Then $\eta(\Gamma)=p(\Gamma)+2\theta(\Gamma)$ if and only if  $\Gamma\cong C_{n_1}^\sigma\cup C_{n_2}^\sigma\cup\cdots \cup C_{n_t}^\sigma$, where $n_1+n_2+\cdots+n_t=n$, and $n_i\equiv 0(\bmod\ 4)$ if $C_{n_i}^\sigma$ is balanced or $n_i\equiv 2(\bmod\ 4)$ if $C_{n_i}^\sigma$ is unbalanced for $i\in \{1,2,\ldots,t\}$.
\end{cor}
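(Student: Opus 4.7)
The plan is to treat this as a clean consequence of Theorems \ref{thm-3-2} and \ref{thm-4-2} together with the identity $\eta(\Gamma)=n-i_+(\Gamma)-i_-(\Gamma)$, pushing through the chain of inequalities from Corollary \ref{cor-4-1} and extracting when both halves are tight.

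For sufficiency, I would take $\Gamma\cong C_{n_1}^\sigma\cup\cdots\cup C_{n_t}^\sigma$ with the stated parity conditions and compute directly from Lemma \ref{lem-2-5}: in each case $i_+(C_{n_i}^\sigma)=i_-(C_{n_i}^\sigma)=\frac{n_i}{2}-1$, so $\eta(C_{n_i}^\sigma)=n_i-2(\frac{n_i}{2}-1)=2$. Summing over the $t$ components (Lemma \ref{lem-2-3}) gives $\eta(\Gamma)=2t$, while $p(\Gamma)=0$ and $\theta(\Gamma)=t$, so $\eta(\Gamma)=2\theta(\Gamma)=p(\Gamma)+2\theta(\Gamma)$, as desired.

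For necessity, suppose $\eta(\Gamma)=p(\Gamma)+2\theta(\Gamma)$. I would combine the two lower bounds of Theorems \ref{thm-3-1} and \ref{thm-4-1} (extended via Remarks \ref{re-1} and \ref{re-2}):
\begin{equation*}
n-\eta(\Gamma)=i_+(\Gamma)+i_-(\Gamma)\geq 2\left(\tfrac{n-p(\Gamma)}{2}-\theta(\Gamma)\right)=n-p(\Gamma)-2\theta(\Gamma).
\end{equation*}
The hypothesis forces this chain to collapse to equalities, hence
\begin{equation*}
i_+(\Gamma)=i_-(\Gamma)=\tfrac{n-p(\Gamma)}{2}-\theta(\Gamma).
\end{equation*}
Applying Theorem \ref{thm-3-2} (or equivalently Theorem \ref{thm-4-2}) to the first equality immediately yields $\Gamma\cong C_{n_1}^\sigma\cup\cdots\cup C_{n_t}^\sigma$ with the claimed congruence condition on each $n_i$ according to the balance of $C_{n_i}^\sigma$.

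There is essentially no obstacle here; this is a packaging step rather than a genuine argument. The only small point worth stating carefully is why the equality case of the nullity bound forces simultaneous equality of both $i_+$ and $i_-$ bounds, and this is precisely the non-negativity of the two separate slacks in the displayed inequality above. I would write the proof in just a few lines, referring back to the relevant theorems rather than repeating any of the induction bookkeeping.
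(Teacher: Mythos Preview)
Your proposal is correct and follows essentially the same approach as the paper: sufficiency via Lemma \ref{lem-2-5} (the paper routes this through Theorems \ref{thm-3-2} and \ref{thm-4-2}, which in turn invoke Lemma \ref{lem-2-5}), and necessity by observing that the two lower bounds on $i_+(\Gamma)$ and $i_-(\Gamma)$ must both be tight when their sum is forced, then applying Theorem \ref{thm-3-2}. The only cosmetic difference is that the paper phrases the ``both slacks vanish'' step as a short proof by contradiction, whereas you state it directly.
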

\begin{proof}
According to Theorems \ref{thm-3-2} and \ref{thm-4-2}, the sufficiency of this corollary holds by the equality $\eta(\Gamma)=n-(i_+(\Gamma)+i_-(\Gamma))$.

Conversely, let $\Gamma$ be a signed graph satisfying $\eta(\Gamma)=p(\Gamma)+2\theta(\Gamma)$. Then $r(\Gamma)=n-p(\Gamma)-2\theta(\Gamma)$. We claim that $i_+(\Gamma)=\frac{n-p(\Gamma)}{2}-\theta(\Gamma)$ must be correct in this context. In fact, on the contrary, assume that $i_+(\Gamma)\not=\frac{n-p(\Gamma)}{2}-\theta(\Gamma)$. Then $i_+(\Gamma)>\frac{n-p(\Gamma)}{2}-\theta(\Gamma)$ by Theorem \ref{thm-3-1}. Hence
$i_-(\Gamma)=r(\Gamma)-i_+(\Gamma)<\frac{n-p(\Gamma)}{2}-\theta(\Gamma)$, which contradicts the conclusion of Theorem \ref{thm-4-1}. Therefore, $i_+(\Gamma)=\frac{n-p(\Gamma)}{2}-\theta(\Gamma)$, and so the assertion holds by Theorem \ref{thm-3-2}.
\end{proof}

\noindent \textbf{Declaration of competing interest}\vspace{3mm}

The authors declare that there are no conflict of interests.\vspace{3mm}

\noindent \textbf{Data availability}\vspace{3mm}

No data was used for the research described in the article.\vspace{3mm}

\end{document}